\numberwithin{equation}{section}
\theoremstyle{definition}
\newtheorem{thm}{Theorem}[section]
\newtheorem{cor}[thm]{Corollary}
\newtheorem{lem}[thm]{Lemma}
\newtheorem{defn}[thm]{Definition}
\newtheorem{rem}[thm]{Remark}
\newtheorem*{acknowledgments}{Acknowledgments}
\newcommand{\Z}{\mathbb Z}
\newcommand{\C}{\mathbb C}
\newcommand{\ve}{\varepsilon}
\newcommand{\Kmn}{K^{m,n}}
\newcommand{\pK}{\pi_1(S^3 \setminus {\rm int}N(K))}
\newcommand{\pKmn}{\pi_1(S^4 \setminus {\rm int}N(K^{m,n}))}
\newcommand{{\punc}}{{\rm punc}}
\newcommand{{\SU}}{SU(2,\mathbb{C})}
\title{Irreducible $SL(2,\C)$-metabelian representations of branched twist spins}
\author{Mizuki Fukuda}
\address{Mathmatical Institute, Tohoku University, Sendai, 980-8578, Japan}
\email{fukuda.mizuki.r5@dc.tohoku.ac.jp}
\subjclass[2010]{Primary~57Q45; Secondary~57M60, 57M27}
\keywords{2-knots, circle actions, representations}
\begin{document}
\begin{abstract}
An $(m,n)$-branched twist spin is a fibered $2$-knot in $S^4$ which is determined 
by a $1$-knot $K$ and coprime integers $m$ and $n$. 
For a $1$-knot, Nagasato proved that 
the number of conjugacy classes of irreducible $SL(2,\C)$-metabelian representations of the knot group of a $1$-knot 
is determined by the knot determinant of the $1$-knot.
In this paper, we prove that the number of irreducible $SL(2,\C)$-metabelian representations of 
the knot group of an $(m,n)$-branched twist spin up to conjugation is determined by the determinant of a $1$-knot in the orbit space 
by comparing a presentation of the knot group of the branched twist spin with the Lin's presentation of the knot group of the $1$-knot.
\end{abstract}
\maketitle
\section{Introduction}

A $2$-knot is a smoothly embedded $2$-sphere in $S^4$. A $2$-knot is said to be {\it fibered} if its complement admits a fibration structure over the circle with some natural structure in a tubular neighborhood of the $2$-knot. Although it is very difficult to see how the $2$-knot is embedded in $S^4$, the idea of admitting a fibration helps us to construct many examples of $2$-knots, such as spun knots, twist spun knots, rolling, deformed spun knots, branched twist spins, fibered homotopy-ribbon knots, etc~\cite{A,C,H,LM,Li,Pl2,Z}. A branched twist spin is a 2-knot which admits an $S^1$-action in its exterior. The terminology ``branched twist spin" appears in the book of Hillman~\cite{H}. It is known by Pao and Plotnick that a fibered $2$-knot is a branched twist spin if and only if its monodromy is periodic~\cite{Pl}. Therefore, this class has special importance among other known classes of fibered $2$-knots. Note that spun knots and twist spun knots are included in the class of branched twist spins.

We give here a short introduction of branched twist spins based on the classification of locally smooth $S^1$-actions on the $4$-sphere. 
Montgomery and Yang showed that effective locally smooth $S^1$-actions are classified into 
 four types~\cite{MY} and
Fintushel and Pao showed that 
there is a bijection between orbit data and weak equivalence classes of $S^1$-actions on $S^4$~\cite{Fi, Pa}.
Suppose that $S^1$ acts locally smoothly and effectively on $S^4$ and the orbit space is $S^3$.
Then there are at most two types of exceptional orbits called $\Z_m$-type and $\Z_n$-type, where $m,n$ are coprime positive integers.  
Let $E_m$ (resp. $E_n$) be the set of exceptional orbits of $\Z_m$-type (resp. $\Z_n$-type) and $F$ be the fixed point set.
The image of the orbit map of $E_n$, denoted by $E^{\ast}_n$, is an open arc in the orbit space $S^3$,
 and that of $F$, denoted by $F^{\ast}$, is the two points in $S^3$ which are the end points of $E^{\ast}_n$.
It is known that $E^{\ast}_m \cup E^{\ast}_n \cup F^{\ast}$ constitutes a $1$-knot $K$ in $S^3$ 
 and $E_n \cup F$ is diffeomorphic to the 2-sphere.  
The {\it $(m,n)$-branched twist spin} of $K$ is defined as $E_n \cup F$. 
Note that the $(m,1)$-branched twist spin is the $m$-twist spun knot and the $(0,1)$-branched twist spin is the spun knot.
If $K$ is a torus knot or a hyperbolic knot then its $(m , n)$-branched twist spins with $m>n$ and $m \geq 3$ are non-trivial. This follows from the fact that $K^{m , n}$ is not reflexive known by Hillman and Plotnick~\cite{HP}.

An oriented $k$-knot $K$ is said to be equivalent to another oriented $k$-knot $K^{\prime}$,
denoted by $K \sim K^{\prime}$, if there exists a smooth isotopy $H_t: S^{k+2}\to S^{k+2}$ such that $H_0={\rm id}$ and $H_1(K)=K^{\prime}$ as oriented $k$-knots.
In~\cite{F}, the author studied the elementary ideal of the fundamental group of the complement of a branched twist spin and gave a criterion to detect if two branched twist spins $K_1^{m_1,n_1}$ and $K_2^{m_2,n_2}$ are inequivalent by the knot determinants 
$\Delta_{K_1}(-1)$ and $\Delta_{K_2}(-1)$, where $\Delta_K (t)$ is the Alexander polynomial of a $1$-knot $K$ in $S^3$. 
Note that the definition of an $(m,n)$-branched twist spin is generalized to $(m,n) \in \Z \times \mathbb{N}$ by taking orientations into account, see Section~$2.1$. 

 The knot determinant is related to the number of irreducible $SL(2,\C)$-metabelian representations of the fundamental group of the knot complement~\cite{L,NY}. The aim of this paper is to count the number of such representations for a branched twist spin.
Similar to the results in~\cite{L,NY}, the number of such representations is given by the knot determinant as follows:

\begin{thm}\label{Main}
The number of irreducible $SL(2,\C)$-metabelian representations of $\pKmn$ is
\begin{equation*}
\left\{
\begin{split}
&\frac{|\Delta_K(-1)| -1}{2}& \hspace{20pt}(m : {\rm even})\\
&0&(m : {\rm odd}),
\end{split}
\right.
\end{equation*}
where $N(K^{m,n})$ is a compact tubular neighborhood of $K^{m,n}$ in $S^4$.
\end{thm}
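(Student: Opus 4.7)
The plan is to exploit the fact that $\pKmn$ admits a presentation obtained from $\pK$ by adjoining a single generator $h$ (corresponding to a principal orbit of the $S^{1}$-action) together with commutation and power relations, and to combine this with Lin's bridge presentation of $\pK$ to push the metabelian count from the $1$-knot to the branched twist spin.

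First I would assemble a presentation of $\pKmn$ from the Fintushel--Pao description of $S^{1}$-actions on $S^{4}$ and Plotnick's description of the periodic monodromy. Writing Lin's bridge presentation of $\pK$ as $\langle x_{1},\dots,x_{k} \mid R_{1},\dots,R_{\ell}\rangle$ with distinguished meridian $\mu$, the group of the branched twist spin should take the form
$$
\pKmn = \langle x_{1},\dots,x_{k}, h \mid R_{1},\dots,R_{\ell},\ [h,\mu]=1,\ h^{n}\mu^{-m}=1,\ \text{monodromy relations for } h\rangle.
$$
The advantage of this formulation is that the relations $R_{j}$ are exactly those that Lin and Nagasato--Yamaguchi analyzed, so their metabelian framework applies verbatim to the subgroup generated by the $x_{i}$'s, and only the three relations involving $h$ are new.

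Next I would analyze an irreducible $SL(2,\C)$-metabelian representation $\rho$ of $\pKmn$. The standard argument for $1$-knots, which I would repeat, forces $\rho(\mu)$ to be trace-zero (because in a metabelian irreducible representation $\rho(\mu)$ must interchange the two common eigenlines of the abelian image of the commutator subgroup), so $\rho(\mu)^{2}=-I$. The relation $[h,\mu]=1$ and Schur's lemma applied to the irreducible image force $\rho(h)$ to be scalar, hence $\rho(h)\in\{\pm I\}$ since the determinant is $1$. The relation $h^{n}\mu^{-m}=1$ then collapses to $\rho(\mu)^{m}=\pm I$. If $m$ is odd, $\rho(\mu)^{m}=\pm\rho(\mu)$ is traceless and cannot equal a scalar, so no such representation exists; if $m$ is even, $\rho(\mu)^{m}=(-1)^{m/2}I$, and the relation is satisfied by the corresponding choice of sign for $\rho(h)$.

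When $m$ is even it remains to set up a bijection between conjugacy classes of irreducible metabelian representations of $\pKmn$ and of $\pK$. Restriction to the subgroup generated by the $x_{i}$'s gives a candidate map, and I would show it is a bijection: the restriction is still irreducible because $\rho(h)$ is scalar and therefore cannot enlarge the projective image, while conversely every irreducible metabelian representation $\rho_{0}$ of $\pK$ extends uniquely by $\rho(h)=\pm I$ with the sign dictated by $\rho_{0}(\mu)^{m}$. The count then reduces to the Nagasato--Yamaguchi formula $(|\Delta_{K}(-1)|-1)/2$ for the $1$-knot $K$ appearing in the orbit space. The main obstacle I foresee is verifying that the monodromy relations involving $h$ impose no extra constraints on the $x_{i}$'s: since scalar matrices are central in $SL(2,\C)$, conjugation by $\rho(h)$ is trivial and these relations should degenerate to consequences of the $R_{j}$'s, but making this precise requires an explicit description of how $h$ acts on the bridge generators in Lin's presentation, and this is where the careful comparison of presentations advertised in the abstract does the real work.
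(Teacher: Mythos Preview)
Your approach is sound and is genuinely more direct than the paper's. The paper does not use the presentation with the orbit generator $h$; instead it works with Plotnick's fibered description $\punc(M_K)\times_{\tau^{n}}S^{1}$ and the Nagasato--Yamaguchi presentation of $\pi_{1}(M_{K})$, obtaining $2gm+1$ generators $\tau^{j}\tilde x_{i},\eta$. It then tracks the diagonal entries $\lambda_{i}^{(j)}$ across the $m$ sheets via the relation $\eta^{q}\tau^{j+1}\tilde x_{i}\eta^{-q}=\tau^{j}\tilde x_{i}$ (where $nq\equiv 1\pmod m$) and uses the parity of $q$ to force either $\lambda_{i}^{(j)}=(\lambda_{i}^{(j)})^{-1}$ for all $i,j$ (odd $m$) or a collapse to a single copy of Lin's presentation (even $m$). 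Your route sidesteps the branched cover entirely; what it buys is brevity, while the paper's route makes the link to the geometry of $M_{K}$ and to the Nagasato--Yamaguchi framework explicit.

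The one point you have not pinned down is precisely the one that makes your argument work, and as written your Schur step is not valid. From $[h,\mu]=1$ alone you only get that $\rho(h)$ lies in the centraliser of the anti-diagonal matrix $\rho(\mu)$, which is two-dimensional, not scalar. What you need is that in the presentation actually available (Lemma~\ref{P}) the element $h$ commutes with \emph{every} generator $y_{i}$, so $h$ is central in $\pKmn$ and there are no further ``monodromy relations'' at all. Once you use this, Schur gives $\rho(h)\in\{\pm I\}$ immediately and your ``main obstacle'' evaporates: conjugation by $\rho(h)$ is trivial, so no extra constraints on the $x_{i}$ can arise and the bijection with irreducible metabelian representations of $\pK$ (counted by Nagasato's $SL(2,\C)$ version of Lin's theorem) goes through. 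One minor correction: the power relation is $\mu^{|m|}h^{\beta}=1$ with $n\beta\equiv\pm 1\pmod m$, not $h^{n}\mu^{-m}=1$; since $\beta$ is odd whenever $m$ is even, this does not affect your parity analysis of $\rho(\mu)^{|m|}$.
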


As an immediate corollary, we obtain the same criterion as in~\cite{F}.

\begin{cor}[F.~\cite{F}]
Branched twist spins $K^{m_1,n_1}$ and $K^{m_2,n_2}$ are inequivalent if one of the following holds:
\begin{itemize}
\item[(1)] $m_1$ and $m_2$ are even and $|\Delta_{K_1}(-1)| \neq |\Delta_{K_2}(-1)|$,
\item[(2)] $m_1$ is even, $m_2$ is odd and $|\Delta_{K_1}(-1)| \neq 1$.
\end{itemize}
\end{cor}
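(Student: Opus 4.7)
The plan is to deduce the corollary directly from Theorem~\ref{Main} by observing that the number of conjugacy classes of irreducible $SL(2,\C)$-metabelian representations of $\pi_1(S^4 \setminus {\rm int}N(K^{m,n}))$ is an invariant of the oriented equivalence class of $K^{m,n}$. If $H_t:S^4 \to S^4$ is a smooth isotopy with $H_0 = {\rm id}$ and $H_1(K_1^{m_1,n_1}) = K_2^{m_2,n_2}$, then $H_1$ carries a tubular neighborhood of one knot onto a tubular neighborhood of the other, and hence induces an isomorphism between the two knot groups. Any such isomorphism sends irreducible $SL(2,\C)$-metabelian representations to irreducible $SL(2,\C)$-metabelian representations and respects conjugation, so the counts provided by Theorem~\ref{Main} must agree on equivalent branched twist spins. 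Hence it suffices to show that under either hypothesis (1) or (2) those counts differ.

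For case (1), both $m_1$ and $m_2$ are even, and Theorem~\ref{Main} gives the counts $(|\Delta_{K_1}(-1)|-1)/2$ and $(|\Delta_{K_2}(-1)|-1)/2$ respectively; these are unequal precisely when $|\Delta_{K_1}(-1)| \neq |\Delta_{K_2}(-1)|$, which is the stated hypothesis. For case (2), Theorem~\ref{Main} gives $(|\Delta_{K_1}(-1)|-1)/2$ for the even case and $0$ for the odd case; these coincide only if $|\Delta_{K_1}(-1)| = 1$, and this is excluded by hypothesis. In either case the two knot groups have different numbers of conjugacy classes of irreducible $SL(2,\C)$-metabelian representations, so they are not isomorphic, and consequently $K_1^{m_1,n_1}$ and $K_2^{m_2,n_2}$ are inequivalent as oriented 2-knots.

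Since Theorem~\ref{Main} is already available at this point in the paper, there is essentially no obstacle; the only ingredient beyond pure arithmetic is the standard isotopy-invariance step for the knot group and its representation variety, and the rest is comparison of two integers. This is exactly why the author can label the statement an \emph{immediate} corollary.
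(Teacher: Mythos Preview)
Your proposal is correct and matches the paper's approach: the paper simply labels this an ``immediate corollary'' of Theorem~\ref{Main} without writing out a proof, and your argument spells out exactly the invariance-plus-arithmetic reasoning that makes it immediate.
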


This paper is organized as follows:
In Section 1, we define an $(m,n)$-branched twist spin $K^{m,n}$ as an oriented $2$-knot and introduce Plotnick's presentation of $\pKmn$.
In Section 2, we state the Lin's presentation of a $1$-knot and the Nagasato-Yamaguchi's presentation of the $m$-fold cyclic branched cover of $S^3$ along $K$.
In Section 3, we observe irreducible $SL(2,\C)$-metabelian representations of $\pKmn$ 
and  prove Theorem~\ref{Main}.

\begin{acknowledgments}
The author is grateful to his supervisor, Masaharu Ishikawa, for many helpful suggestions. 
\end{acknowledgments}

\section{Two presentations of branched twist spins}

\subsection{The $(m,n)$-branched twist spin}
Suppose that $S^4$ has an effective locally smooth $S^1$-action.
Let $E_m$ be the set of exceptional orbits of $\mathbb{Z}_m$-type, where $m$ is a positive integer, 
and $F$ be the fixed point set.
Set $E^{\ast}_m$ and $F^{\ast}$ to be the image of $E_m$ and $F$ by the orbit map, respectively.
Montgomery and Yang showed that effective locally smooth $S^1$-actions are classified into 
the following four types:
(1) $\{D^3\}$,(2) $\{S^3\}$, (3) $\{S^3 , m\}$, (4) $\{ (S^3, K), m ,n\}$, which are called orbit data~\cite{MY}. 
The $3$-ball and the $3$-sphere in these notations represent the orbit spaces.
In case (4), the union $E^{\ast}_m \cup E^{\ast}_n \cup F^{\ast}$ constitutes a $1$-knot $K$ in the orbit space $S^3$ 
and the union $E_n \cup F$ is diffeomorphic to the $2$-sphere.
This $2$-sphere is embedded in $S^4$, and is called the $(m,n)$-branched twist spin of $K$, denoted by $K^{m,n}$.
In case (3), for an arc $A^{\ast}$ in $S^3$ whose end points are $F^{\ast}$, the preimage of $A^{\ast}$ is denoted by $A$.
Then the union $A \cup F$ is diffeomorphic to the $2$-sphere, 
and is called a twist spun knot.
We may regard an $m$-twist spun knot as $K^{m,1}$, where $K$ is $A^{\ast} \cup E^{\ast}_m \cup F^{\ast}$.

We recall the definition of $(m,n)$-branched twist spins for $(m,n)\in\mathbb{Z}\times\mathbb{N}$ in~\cite{F}.
First, we remark that the definition in ~\cite{F} depends on the choice of the orientation of $K$.
Actually, in the definition we fixed a preferred meridian-longitude  $(\theta, \phi)$ of $S^3 \setminus {\rm int}N(K)$, 
where $N(K)$ is a compact tubular neighborhood of $K$, 
and replacing $(\theta, \phi)$ by $(-\theta, -\phi)$ may change the equivalence class of $\Kmn$.

We give the definition of $\Kmn$.
Let $K$ be a $1$-knot in $S^3$ and $(m,n)$ be a pair of integers in $(\mathbb{Z} \setminus \{0\}) \times\mathbb N$
such that $|m|$ and $n$ are coprime. 
We decompose the orbit space $S^3$ into five pieces as follows:
$$
S^3 = (S^3 \setminus {\rm int}N(K))  \cup (E^{c\ast}_m \times D^2) \cup (E^{c\ast}_n \times D^2) \cup (D^{3\ast}_1\sqcup D^{3\ast}_2),
$$
where $D^{3\ast}_1 \sqcup D^{3\ast}_2$ is a compact neighborhood of $F^{\ast}$ and $E^{c\ast}_m$ and $E^{c\ast}_n$ are the connected components of $K \setminus \text{int}(D^{3\ast}_1 \cup D^{3\ast}_2)$ 
such that  $E_m^{c\ast}\subset E_m^{\ast}$ and $E_n^{c\ast}\subset E_n^{\ast}$, see Figure~\ref{test}.
Considering the preimage of the orbit map, we decompose $S^4$ as follows:
\begin{equation}\label{dec}
S^4  = (\left(S^3 \setminus {\rm int} N(K) \right)\times S^1) \cup (E_m \times D^2) \cup (E_n \times D^2) \times (D^4_1 \sqcup D^4_2).
\end{equation}

Let $p$ denote the orbit map.
Choosing a point $z^{\ast}_m$ in $E^{c\ast}_m$, let $D^{2\ast}_{z^{\ast}_m}$ be a 2-disk in $S^3$ centered at $z^{\ast}_m \in E^{c\ast}_m$ and transversal to $E^{c\ast}_m$.
The preimage $p^{-1}(D^{2\ast}_{z^{\ast}_m})$ is a solid torus $V_m$ whose core is the exceptional orbit of $\mathbb{Z}_m$-type.

\begin{figure}[htbp]
\centering
\includegraphics[scale=0.5]{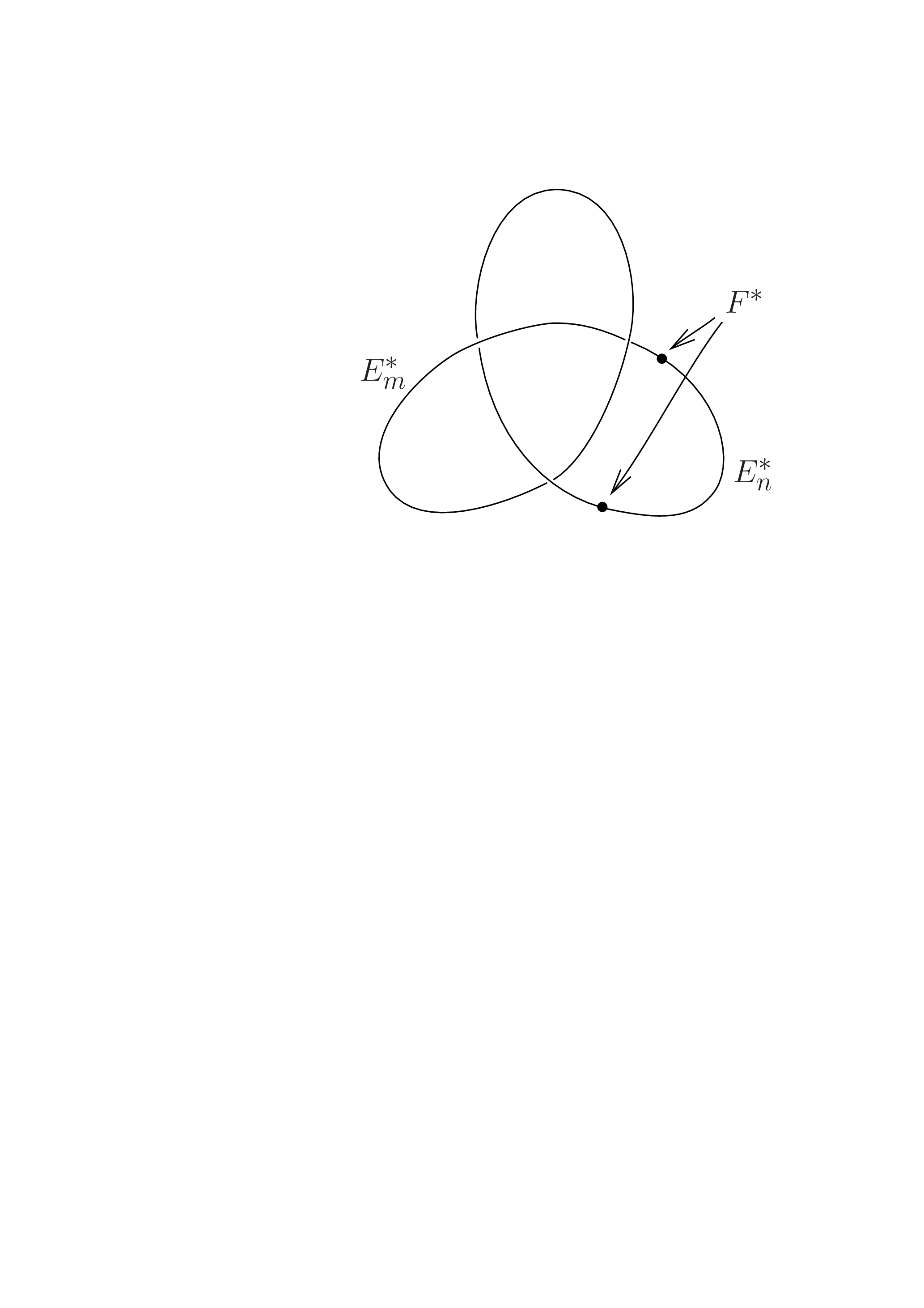}
\hspace{50pt}
\includegraphics[scale=1]{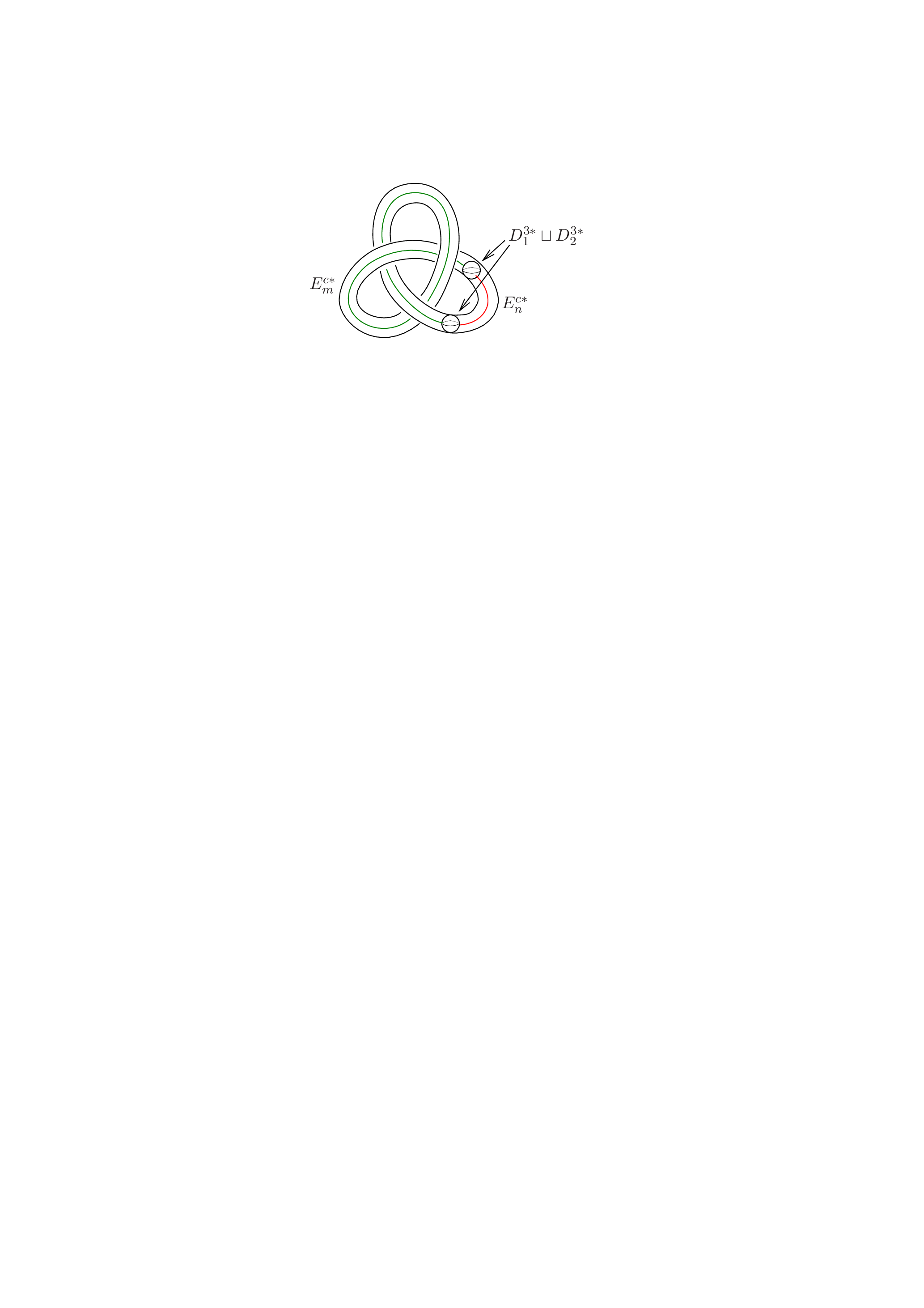}
\caption{Decomposition of $S^3$}\label{test}
\end{figure}

Now we discuss the orientations of $V_m$ and $E^{c\ast}_m$.
Let $K$ be an oriented $1$-knot in $S^3$.
First, fix the orientation of $S^4$ and those of orbits such that they coincide with the direction of the $S^1$-action.
These orientations determine the orientation of $V_m\times E_m^{c*}$.
Let  $(\theta,\phi)$ be the preferred meridian-longitude pair of $K$ such that the orientation of the longitude $\phi$ coincides the orientation of $K$.
From the decomposition~(\ref{dec}),
we can see that $\phi$ is regarded as a coordinate of the second factor of $V_m\times E^{c\ast}_m$.
We assign the orientation of $V_m$ so that the orientation of $V_m\times E^{c\ast}_m$ coincides with the given orientation of $S^4$.
Finally, we choose the meridian and longitude pair $(\Theta,H)$ 
of $V_m\cong D^2\times S^1$ such that $H$ becomes the meridian
of $V_n$ in the decomposition $V_m\cup V_n=p^{-1}(\partial D_i^{3*})$
and the orbits of the $S^1$-action are in the direction 
$\ve n\Theta+|m|H$ with $n>0$, where $\ve= 1$ if $m\geq 0$ and $\ve = -1$ if $m<0$. 

\begin{defn}[Branched twist spin]
Let $K$ be an oriented knot in $S^3$.
For each pair $(m,n)\in\mathbb{Z}\times \mathbb{N}$ with 
$m\ne 0$ such that $|m|$ and $n$ are coprime,
let $K^{m,n}$ denote the $2$-knot $E_n\cup F$.
If $(m,n)=(0,1)$ then define $K^{0,1}$ to be the spun knot of $K$.
The $2$-knot $K^{m,n}$ is called an $(m,n)$-$branched\ twist\ spin$ of $K$.
\end{defn}
Note that the branched twist spin $K^{m,1}$ constructed from $\{(S^3 , K),m,1\} $ is an $m$-twist spun knot of $K$.
\begin{rem}\label{rem}
Let $-K$ be an oriented knot obtained from $K$ by reversing the orientation of $K$.
From the construction of $\Kmn$, we see that $K^{m,n}$ is equivalent to $-(-K)^{-m,n}$.
\end{rem}

Let $K$ be a $k$-knot in $S^{k+2}$. 
The fundamental group of the knot complement $S^{k+2}\setminus \text{int}N(K)$ is called the {\it knot group} of $K$, 
where $N(K)$ is a compact tubular neighborhood of $K$.

\begin{lem}[\cite{F}]\label{P}
Let $K$ be an oriented $1$-knot and $K^{m,n}$ be the $(m,n)$-branched twist spin of $K$ with $(m,n)\in\mathbb Z\times \mathbb N$, where $|m|$ and $n$ are coprime.
Let $\langle y_1,\ldots, y_s\mid r_1,\ldots,r_t\rangle$
be a presentation of the knot group of $K$ such that $y_1$ is a meridian.
Then the knot group of $K^{m,n}$ has the presentation
\begin{equation}\label{R1}
\pi_1(S^4 \setminus \text{int}N(K^{m , n})) \cong 
\langle y_1,\ldots,y_s,h\ |\ r_1,\ldots,r_t,y_ihy^{-1}_ih^{-1}, y_1^{|m|} h^{\beta} \rangle,
\end{equation} 
where $\beta$ is an integer such that 
$n\beta\equiv \ve$ (mod $m$) if $m$ is non-zero and $\beta = 1$ if $m=0$.
Recall that $\ve = 1$ if $m\geq 0$ and $\ve = -1$ if $m<0$.
\end{lem}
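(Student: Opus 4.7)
The plan is to apply Seifert--van Kampen to the decomposition~(\ref{dec}) of $S^4$ after removing a compact tubular neighborhood of $K^{m,n}=E_n\cup F$. Since $E_n\times D^2$ is a regular neighborhood of $E_n$ and $D^4_1\sqcup D^4_2$ contains $F$, discarding those two pieces leaves
\[
S^4\setminus {\rm int}\,N(K^{m,n})\ \simeq\ \bigl((S^3\setminus {\rm int}\,N(K))\times S^1\bigr)\ \cup\ V_m,
\]
where $V_m\cong D^2\times S^1$ is the solid-torus neighborhood of the exceptional orbit of $\Z_m$-type described before the statement, and the two pieces are glued along the torus $\partial V_m$.

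Computing the fundamental groups is straightforward. The free part is a product, so
\[
\pi_1\bigl((S^3\setminus {\rm int}\,N(K))\times S^1\bigr)\cong \langle y_1,\ldots,y_s,h\mid r_1,\ldots,r_t,\ y_ihy_i^{-1}h^{-1}\rangle,
\]
with $h$ generating the $S^1$ factor and commuting with each $y_i$. The solid torus $V_m$ has $\pi_1(V_m)\cong\Z$ generated by the longitude $H$; its meridian $\Theta$ bounds a disk and is therefore trivial in $\pi_1(V_m)$. The van Kampen theorem then adds exactly one extra relation, obtained by expressing $\Theta$ in the generators of the free part.

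The main task is to carry out this expression, i.e.\ to identify the class of $\Theta$ on $\partial V_m$ in the product basis $(\theta,h)$, where $\theta=y_1$ is the lift of the meridian of $K$. Writing $\Theta=a\theta+bh$ and $H=c\theta+dh$ with $ad-bc=\pm 1$ and substituting into the identity that the regular orbits on $\partial V_m$ have direction $\ve n\Theta+|m|H$ and are represented by $h$ in the product basis yields
\[
\ve n a+|m|c=0,\qquad \ve n b+|m|d=1.
\]
Using $\gcd(|m|,n)=1$ and unimodularity forces $a=|m|$, $c=-\ve n$, and $b=\beta$ for an integer $\beta$ with $n\beta\equiv\ve\pmod m$. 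Hence $\Theta=y_1^{|m|}h^{\beta}$ in multiplicative notation, and setting $\Theta=1$ produces the last relation in~(\ref{R1}).

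The case $m=0$, where $K^{0,1}$ is by definition the spun knot, needs a separate remark: there is no exceptional orbit of $\Z_m$-type, the free part is instead capped off by the two balls containing $F$ in a way that kills $h$, and the convention $\beta=1$ in the stated presentation reproduces this since then $y_1^{|m|}h^\beta=h$. I expect the main obstacle to lie in the change-of-basis computation on $\partial V_m$, where the sign $\ve$, the choice of $(\Theta,H)$, and the preferred meridian-longitude of $K$ must all be tracked consistently with the orientation conventions fixed before the statement.
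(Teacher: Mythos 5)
Your proposal is correct and follows essentially the route the paper intends: the lemma is only quoted from~\cite{F}, but the entire setup of Section~2.1 --- the decomposition~(\ref{dec}), the solid torus $V_m=p^{-1}(D^{2\ast}_{z^{\ast}_m})$, and the basis $(\Theta,H)$ normalized so that the orbits run in the direction $\ve n\Theta+|m|H$ --- is exactly the input for the Seifert--van Kampen computation you carry out, and your change-of-basis argument killing $\Theta=y_1^{|m|}h^{\beta}$ with $n\beta\equiv\ve\pmod m$ reproduces the stated relation.
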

Note that $\pKmn$ is isomorphic to $\pi_1(S^4 \setminus \text{int}N((-K)^{-m , n}))$ by Remark~\ref{rem}.

\subsection{Plotnick's presentation}
Assume that $m \neq 0$.
We ignore the orientation of $\Kmn$ since we are interested in the fundamental group of its complement.
By Remark~\ref{rem}, changing the orientation of $K$ and the sign of $m$ if necessary, we can assume that $m$ is positive.
Pao constructed the knot complement of $\Kmn$ as follows~\cite{Pa}:
Let $M_K$ be the $m$-fold cyclic branched cover of $S^3$ along $K$ and $\tau : M_K \to M_K$ be the diffeomorphism associated with 
the canonical deck transformation of $M_K$.
Let $M_K \times_{\tau^n} S^1$ be the manifold obtained from $M_K \times I$ by identifying $M_K \times \{0\}$ with $M_K \times \{1\}$
by $(z , 1) \mapsto (\tau^{ n} z , 0)$, where $\tau^{n}$ means the $n$-th power of composite of $\tau$.
Note that $M_K \times_{\tau^{n}} S^1$ has the natural $S^1$-action $\varphi_s\langle y , t \rangle = \langle y , t+ s \rangle$,
where $\langle y , t \rangle$ denotes the image of $(y , t) \in M_K \times I$ by the identification.
Let $x$ be a branch point of $M_K$.
Then the orbit of $\langle x ,0 \rangle$ is a circle in $M_K \times_{\tau^{ n}} S^1$.
There is a neighborhood of the orbit which is invariant by the $S^1$-action, denoted by $T$. 
It is known in~\cite{Pa} that the knot complement of $\Kmn$ is diffeomorphic to  
($M_K \times_{\tau^{ n}} S^1)\setminus {\rm int}T$, which is also diffeomorphic to $\punc (M_K) \times_{\tau^{n}} S^1$, where 
$\punc(M_K) = M_K \setminus {\rm int} D^3$ with $D^3$ being a $3$-ball in $M_K$.
Note that $K^{m,n}$ is regarded as the branch set of the $n$-fold cyclic branched cover of $S^4$ along the $m$-twist spun knot of $K$.

The following lemma is shown by Plotnick in~\cite{Pl1}.
\begin{lem}[Plotnick~\cite{Pl1}]\label{Pl}
Let $\Kmn$ be a branched twist spin of $K$.
Then the following holds:
$$
\pKmn \cong
\pi_1(\punc (M_K)) \ast \langle \eta \rangle/ \langle \eta (\tau^{n} z) \eta^{-1} = z\ {\rm for\ all}\ z \in \pi_1(M_K) \rangle,
$$ 
where $\eta$ is a meridian of $\Kmn$.
\end{lem}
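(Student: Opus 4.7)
The plan is to apply the standard Van Kampen computation for the fundamental group of a mapping torus, using the already-established description of the knot complement as $\punc(M_K) \times_{\tau^n} S^1$.

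First, I would recall the general fact: for any path-connected space $X$ with a self-homeomorphism $f$, the mapping torus $X_f := X \times I / (x,1) \sim (f(x),0)$ has fundamental group
\[
\pi_1(X_f) \;\cong\; \pi_1(X) \ast \langle \eta \rangle \big/ \langle \eta\, f_*(z)\, \eta^{-1} = z \mid z \in \pi_1(X) \rangle,
\]
where $\eta$ is represented by the loop $t \mapsto \langle x_0, t\rangle$ at a basepoint $x_0$. This is proved by covering $X_f$ by a small open thickening of $X \times \{1/2\}$ and a small thickening of the identification slice (both of which deformation retract to $X$), observing that their intersection is homotopy equivalent to two disjoint copies of $X$ (identified via $\mathrm{id}$ on one side and via $f$ on the other), and then applying Van Kampen.

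Applying this with $X = \punc(M_K)$ and $f = \tau^n|_{\punc(M_K)}$ gives immediately
\[
\pi_1\!\left(\punc(M_K) \times_{\tau^n} S^1\right) \;\cong\; \pi_1(\punc(M_K)) \ast \langle \eta \rangle \big/ \langle \eta\, \tau^n_*(z)\, \eta^{-1} = z \rangle.
\]
The discussion preceding the lemma identifies this mapping torus with the knot exterior of $K^{m,n}$, so the left-hand side is exactly $\pKmn$. To pass from $\pi_1(\punc(M_K))$ to $\pi_1(M_K)$ in the relators, I would note that the inclusion $\punc(M_K) \hookrightarrow M_K$ induces an isomorphism on $\pi_1$ (gluing in a $3$-ball introduces nothing new, because $\pi_1(S^2) = \pi_1(D^3) = 1$, so Van Kampen gives the identity). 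Hence the relators can equivalently be indexed over $z \in \pi_1(M_K)$.

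The remaining point is to verify that the stable letter $\eta$ really is a meridian of $K^{m,n}$. A tubular neighborhood $T$ of the branch orbit in $M_K \times_{\tau^n} S^1$ is a tubular neighborhood of $K^{m,n}$, and a meridian of $K^{m,n}$ is a small loop in $\partial T$ bounding a disk transverse to $K^{m,n}$. Taking a basepoint on a $\punc(M_K)$-slice just outside the removed ball and tracing once around the $S^1$-direction produces such a loop, which is precisely the class represented by $\eta$ in the mapping torus construction. The main subtlety is orientation bookkeeping: the direction in which $\eta$ traverses the identification slice determines whether the relation reads $\eta\, \tau^n_*(z)\, \eta^{-1} = z$ or its inverse, and I expect this — matching conventions with the meridian chosen in Lemma~\ref{P} — to be the only place requiring genuine care; the rest is a direct application of Van Kampen.
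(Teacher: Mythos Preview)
The paper does not actually prove this lemma: it is stated with attribution to Plotnick~\cite{Pl1} and used as a black box, so there is no ``paper's own proof'' to compare against. Your argument is the standard and correct one. The identification of the knot exterior with the mapping torus $\punc(M_K)\times_{\tau^n}S^1$ is recorded in the paragraph immediately preceding the lemma (due to Pao), and once that is in hand the Van Kampen computation for a mapping torus gives the presentation exactly as you wrote it. Your remark that $\pi_1(\punc(M_K))\cong\pi_1(M_K)$ justifies writing the relators over $z\in\pi_1(M_K)$, and your discussion of the meridian is adequate: the $S^1$-factor loop in the mapping torus bounds a disk meeting $K^{m,n}$ transversally once, so it represents a meridian class. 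The orientation/sign bookkeeping you flag is indeed the only delicate point, and in this paper it is harmless since the presentation is used only to count conjugacy classes of representations.
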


\subsection{Lin's presentation}
Let $K$ be a $1$-knot in $S^3$. 
A Seifert surface $S$ of $K$ is called {\it free} if $S^3 = N(S) \cup (S^3\setminus {\rm int}N(S))$ gives a Heegaard splitting of $S^3$. 
It is known that any $1$-knot has a free Seifert surface.
A presentation of $\pK$ is obtained from the Heegaard splitting associated to a free Seifert surface as follows:
Let $S$ be a free Seifert surface of $K$ of genus $g$ and $W$ be a spine of $S$.
Then $H_1 = S \times [-1,1]$ and $H_2 = S^3 \setminus {\rm int}H_1$ is a Heegaard splitting of $S^3$.
Let $K^{\prime}$ be a simple closed curve obtained from $K$ by pushing it into $H_1$ slightly.
Choose a base point $\ast$ in $W\subset S \times \{0\}$ such that $\ast$ does not on $K$ and $K^{\prime}$. 
Since $H_1$ and $H_2$ are handlebodies with genus $2g$,
we may choose generators $a_1, \ldots , a_{2g}$ of $\pi_1(H_1)$ and 
generators $x_1, \ldots , x_{2g}$ of $\pi_1(H_2)$.
Let $a^{+}_1, \ldots, a^{+}_{2g}$ and $a^{-}_1, \ldots, a^{-}_{2g}$ denote the loops 
$a_1 \times \{1\}, \ldots, a_{2g}\times \{1\}$ and $a_1 \times \{-1\}, \ldots ,a_{2g}\times \{-1\}$.
Each $a^{+}_i$ (resp. $a^{-}_i$) is written in a word of $x_1, \ldots, x_{2g}$ by the homeomorphism from $\partial H_2$ to $\partial H_1$.
The words of $a^{+}_i$ (resp. $a^{-}_i$) are denoted by $\alpha_i$ (resp. $\beta_i$) for $i = 1,\ldots, 2g$.
There is a unique arc $c$, up to isotopy, such that $(\ast \times [-1,1]) \cup c$ is a meridian of $K^{\prime}$.
The homotopy class of this loop is denoted by $\mu$. 
From van Kampen theorem, the following theorem holds:
\begin{lem}[Lin~\cite{L}]\label{L}
Let $K$ be a $1$-knot in $S^3$ and $S$ be a free Seifert surface of $K$.
Let $S^3 = H_1 \cup H_2$ be the Heegaard splitting associated to $S$.
For generators $x_1,\ldots, x_{2g}$ of $\pi_1(H_2)$, $\pi_1(E_K)$ has the following presentation:
\begin{equation}\label{L1}
\pK \cong \langle x_1 , \ldots , x_{2g}, \mu\ |\  \mu \alpha_i \mu^{-1} = \beta_i \rangle,
\end{equation}
where $g$ is the genus of $S$, and 
$\alpha_i$,  $\beta_i$ are the words in $x_1,\ldots,x_{2g}$ determined above.
\end{lem}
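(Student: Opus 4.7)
The idea is to realize $\pi_1(E_K)$ as an HNN extension of the free group $\pi_1(H_2)$ by cutting the knot complement $E_K=S^3\setminus\text{int}N(K)$ along the Seifert surface. Let $\hat{S}=S\setminus\text{int}(N(K)\cap S)$ be the properly embedded truncation of $S$ sitting in $E_K$, whose boundary is a longitude of $K$ on $\partial N(K)$. I plan to show first that cutting $E_K$ along $\hat{S}$ yields a manifold homeomorphic to $H_2$, and then to recover $E_K$ by regluing in order to read off the HNN presentation, whose stable letter will be identified with the meridian $\mu$.

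Let $Y$ denote $E_K$ cut along $\hat{S}$. Using a bicollar $N(\hat{S})\cong\hat{S}\times[-1,1]$ inside $E_K$, I decompose
\[
Y=(E_K\setminus\text{int}N(\hat{S}))\cup(\hat{S}\times[-1,0])\cup(\hat{S}\times[0,1]),
\]
where the two half-collars are attached along $\hat{S}\times\{\pm 1\}$ and their free faces $\hat{S}\times\{0\}$ constitute the new boundary copies $\hat{S}^{\pm}$. The complement $E_K\setminus\text{int}N(\hat{S})$ equals $S^3\setminus\text{int}(N(K)\cup N(S))$, which is obtained from $H_2=S^3\setminus\text{int}N(S)$ by deleting the intersection $N(K)\cap H_2$. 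Since $K\subset\partial H_2$, this intersection is a half-tube meeting $\partial H_2$ in an annular neighborhood of $K$, and its deletion only re-routes the boundary of $H_2$ through an annulus in $\partial N(K)$ without changing the homeomorphism type. Hence $E_K\setminus\text{int}N(\hat{S})\cong H_2$, the two half-collars deformation retract onto their attaching surfaces, and $\pi_1(Y)\cong\pi_1(H_2)=\langle x_1,\ldots,x_{2g}\rangle$, free on the given generators.

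Finally, $E_K$ is recovered from $Y$ by regluing the two boundary copies $\hat{S}^{+}$ and $\hat{S}^{-}$ via the identity on $\hat{S}$. The Seifert--van Kampen theorem for HNN extensions then gives
\[
\pi_1(E_K)\cong\langle x_1,\ldots,x_{2g},t\mid t\,\iota_{+}(a_i)\,t^{-1}=\iota_{-}(a_i),\ i=1,\ldots,2g\rangle,
\]
where $a_1,\ldots,a_{2g}$ are generators of $\pi_1(\hat{S})\cong\pi_1(S)$, the inclusions $\iota_{\pm}\colon\pi_1(\hat{S}^{\pm})\to\pi_1(Y)$ send $a_i$ to its pushoff $a_i^{\pm}$ from $S$ to $\partial H_1$, and $t$ is any loop crossing $\hat{S}$ transversely once. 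Expressed in the $x_j$-basis of $\pi_1(H_2)$, the pushoffs read $a_i^{+}=\alpha_i$ and $a_i^{-}=\beta_i$; taking $t$ to be the loop $(\ast\times[-1,1])\cup c$ identifies the stable letter with the meridian $\mu$, yielding the asserted presentation. I expect the main technical hurdle to be the careful verification that $E_K\setminus\text{int}N(\hat{S})\cong H_2$, which depends crucially on the freeness of $S$ and on the fact that $K$ lies on the Heegaard surface $\partial H_1=\partial H_2$.
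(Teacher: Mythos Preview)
Your proposal is correct and is essentially the argument the paper has in mind: the paper does not give a proof at all but merely sets up the bicollar $H_1=S\times[-1,1]$, the pushoffs $a_i^{\pm}$, the meridian $\mu=(\ast\times[-1,1])\cup c$, and then writes ``From van Kampen theorem, the following theorem holds,'' citing Lin. Your HNN-extension argument---cutting $E_K$ along $\hat S$, identifying the cut-open piece with $H_2$, and reading off the stable-letter relations $\mu\alpha_i\mu^{-1}=\beta_i$---is exactly the content behind that one-line appeal, so there is nothing to compare.
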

\noindent

Let $\langle x_1 , \ldots , x_{2g}, \mu\ |\  \mu \alpha_i \mu^{-1} = \beta_i \rangle$ be a Lin's presentation of $\pK$.
Denote the sum of indices of $x_j$ in $\alpha_i$ by $v_{ij}$ and that in $\beta_i$ by $u_{ij}$.
Then the $2g \times 2g$ matrix $V = (v_{ij})$ is defined.
The matrix V is called a Seifert matrix and $\det (V + {^t}V)$ is called the knot determinant of $K$, which equals to $\Delta_K(-1)$.
Note that all generators $x_1,\ldots ,x_{2g}$ are commutators of $\pK$.
Let $\rho_0 : \pK \to SL(2,\mathbb{C})$ be an $SL(2,\mathbb{C})$-metabelian representation.
Since all $x_1,\ldots ,x_{2g}$ are commutators of $\pK$, we can assume that 

\begin{equation}\label{lineq2}
\rho_0(x_i) = 
\left(
\begin{matrix}
 \lambda_i & 0\\
0 & {\lambda_i}^{-1}
\end{matrix}
\right)
\hspace{10pt}
{\rm or}
\hspace{10pt}
\pm \left(
\begin{matrix}
1 & \omega_i \\
0 & 1
\end{matrix}
\right) 
\ \ \ \ 
(i = 1,\ldots, 2g),
\end{equation}
up to conjugation.
If there exists $i\in\{1,\ldots, 2g\}$ such that $\rho_0(x_i)$ is
$\pm \left(
\begin{matrix}
1 & \omega_i \\
0 & 1
\end{matrix}
\right)$,
then all $\rho(x_j)$ are of the forms
 $
 \pm \left(
\begin{matrix}
1 & \omega_j \\
0 & 1
\end{matrix}
\right).
 $  
This is implied by all $x_j$ are conjugate to each other and such a representation is abelian, especially reducible. 
 Therefore we can assume that 
 an irreducible $SL(2,\mathbb{C})$-metabelian representation satisfies

\begin{equation}\label{lineq}
\rho(x_i) = 
\left(
\begin{matrix}
 \lambda_i & 0\\
0 & {\lambda_i}^{-1}
\end{matrix}
\right),
\hspace{30pt}
\rho(\mu) = 
\left(
\begin{matrix}
0 & -1\\
1 & 0
\end{matrix}
\right) 
\ \ \ \ 
(i = 1,\ldots, 2g),
\end{equation}
up to conjugation. Since $\alpha_i$ and $\beta_i$ are written in words $x_1,\ldots,x_{2g}$, each $\rho_0(\alpha_i)$ and $\rho_0(\beta_i)$ is a diagonal matrix.
From (\ref{lineq}), Lin checked directly the number of irreducible $SU(2,\C)$-metabelian representations of $\pK$. 

\begin{thm}[Lin~\cite{L}]\label{L2}
The number of conjugacy classes of irreducible $SU(2,\mathbb{C})$-metabelian representations of $\pK$ is 
$$
\frac{|\Delta_K(-1)| -1}{2}.
$$
\end{thm}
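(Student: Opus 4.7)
The plan is to substitute the normal form (\ref{lineq}) into the Lin presentation (\ref{L1}), read off a system of multiplicative equations on the eigenvalues $\lambda_1,\dots,\lambda_{2g}$, count its solutions on the real torus, and then pass to $SU(2)$-conjugacy classes by removing the reducible solutions and dividing out the residual involution.

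First, since each $x_j$ maps to a diagonal matrix, $\rho(\alpha_i)$ is the diagonal matrix $\mathrm{diag}\bigl(\prod_j\lambda_j^{v_{ij}},\prod_j\lambda_j^{-v_{ij}}\bigr)$, and $\rho(\beta_i)$ is defined analogously with $u_{ij}$ in place of $v_{ij}$. A one-line matrix computation shows that conjugation by $\rho(\mu)=\bigl(\begin{smallmatrix}0&-1\\1&0\end{smallmatrix}\bigr)$ interchanges the two diagonal entries, so each relation $\mu\alpha_i\mu^{-1}=\beta_i$ collapses to the single scalar identity $\prod_j\lambda_j^{v_{ij}+u_{ij}}=1$. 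The geometric interpretation of the exponent sums as linking numbers of the two push-offs $a_i^\pm$ with $a_j$, together with the symmetry of the linking form, gives $(u_{ij})={^t}V$; writing $\lambda_j=e^{i\theta_j}$ the system becomes the linear congruence
\begin{equation*}
(V+{^t}V)\,\vec\theta\equiv\vec 0\pmod{2\pi\Z^{2g}},
\end{equation*}
which has exactly $|\det(V+{^t}V)|=|\Delta_K(-1)|$ solutions in $(\R/2\pi\Z)^{2g}$.

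Next I separate irreducible from reducible solutions. A pair $(\rho(\mu),\{\rho(x_j)\})$ of the above form is reducible in $SL(2,\C)$ iff $\rho(\mu)$ and every $\rho(x_j)$ share a common eigenvector; the eigenvectors of $\rho(\mu)$ are $(1,\pm i)^t$, and these are eigenvectors of $\mathrm{diag}(\lambda_j,\lambda_j^{-1})$ precisely when $\lambda_j^2=1$, i.e.\ $\theta_j\in\{0,\pi\}$. Writing $\theta_j=\pi\epsilon_j$ with $\epsilon_j\in\{0,1\}$, the system reduces to $(V+{^t}V)\vec\epsilon\equiv\vec 0\pmod 2$; since $\Delta_K(-1)$ is odd for any $1$-knot in $S^3$, the matrix $V+{^t}V$ is invertible modulo $2$, so the only reducible solution on the torus is $\vec\theta=\vec 0$.

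Finally I identify the residual $SU(2)$-conjugation on the normal form. The stabilizer of (\ref{lineq}) is the intersection of the centralizer of $\rho(\mu)$ (the circle $\{aI+b\rho(\mu):a^2+b^2=1\}$) with the normalizer of the diagonal maximal torus, namely $\{\pm I,\pm\rho(\mu)\}$; conjugation by $\rho(\mu)$ preserves $\rho(\mu)$ itself and sends $\mathrm{diag}(\lambda_j,\lambda_j^{-1})$ to $\mathrm{diag}(\lambda_j^{-1},\lambda_j)$, so on parameters the residual equivalence is the involution $\vec\theta\mapsto-\vec\theta$. Its fixed locus $\{0,\pi\}^{2g}$ meets the solution set only at the trivial point by the previous step, so the involution acts freely on the $|\Delta_K(-1)|-1$ irreducible solutions and gives $(|\Delta_K(-1)|-1)/2$ conjugacy classes. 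The main technical obstacle is the identification $(u_{ij})={^t}V$, which is standard but requires either Fox calculus on Lin's presentation or the push-off symmetry of linking numbers on the Seifert surface; the remaining steps are elementary linear algebra on the torus $(\R/2\pi\Z)^{2g}$.
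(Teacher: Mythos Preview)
The paper does not supply its own proof of Theorem~\ref{L2}; it is quoted from Lin~\cite{L}, with the remark that Nagasato~\cite{N} extended it to $SL(2,\C)$. Your argument is correct and is exactly the computation Lin carries out: diagonalise the $\rho(x_i)$, observe that conjugation by $\rho(\mu)$ swaps diagonal entries so the relations become $\prod_j\lambda_j^{(V+{}^tV)_{ij}}=1$, count the $|\det(V+{}^tV)|=|\Delta_K(-1)|$ solutions on the torus, discard the unique reducible solution (using that $\Delta_K(-1)$ is odd), and halve by the residual involution $\vec\theta\mapsto-\vec\theta$. This is also precisely the calculation the present paper reproduces in the $m$ even case of the proof of Theorem~\ref{Main}: equations~(\ref{eq}) and~(\ref{eq2}) are your torus system (split into angular and radial parts for the $SL(2,\C)$ setting), and the final halving by $\{\gamma_i\}\leftrightarrow\{\gamma_i^{-1}\}$ is your involution. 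The one point you flag as a technical obstacle, the identification $(u_{ij})={}^tV$, is indeed the standard Seifert-form symmetry $\mathrm{lk}(a_i^-,a_j)=\mathrm{lk}(a_j^+,a_i)$ and is used without comment both in Lin's paper and here.
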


\begin{rem}
In ~\cite{N}, Nagasato showed that the same statement holds for irreducible $SL(2,\mathbb{C})$-metabelian representations of $\pK$. 
\end{rem}

\subsection{Nagasato-Yamaguchi's presentation}

Let $M_K$ be the $m$-fold cyclic branched cover of $S^3$ along $K$ and $\tau$ be the canonical deck transformation on $M_K$. 
The fundamental region of $M_K$ contains a free Seifert surface of $K$.
Nagasato and Yamaguchi gave a presentation of $\pi_1(M_K)$ from the Lin's presentation of $\pK$.
\begin{thm}[Nagasato,Yamaguchi~\cite{NY}]\label{NY}
Let $ \langle x_1 , \ldots , x_{2g}, \mu\ |\  \mu \alpha_i \mu^{-1} = \beta_i \rangle$ be a Lin's presentation of a $1$-knot $K$.
Then $\pi_1(M_K)$ has the following presentation:
$$
\pi_1(M_K) \cong 
\langle \tau^{0}\tilde{x}_1 , \ldots , \tau^{0}\tilde{x}_{2g} ,\ldots , \tau^{m-1}\tilde{x}_1 , \ldots , \tau^{m-1}\tilde{x}_{2g} \ |\  \tilde{\alpha}^{(j)}_i = \tilde{\beta}^{(j-1)}_i \rangle,
$$
where $\tilde{x}_i$ is the lift of $x_i$ to $M_K$, and $\tilde{\alpha}^{(j)}_i, \tilde{\beta}^{(j)}_i$ are the words obtained from
$\alpha_i , \beta_i$ by replacing $x_1, \ldots x_{2g}$ with $\tau^j \tilde{x}_1, \ldots ,\tau^j \tilde{x}_{2g}$ 
for $i =1,\ldots ,2g$ and $j \equiv 0,\ldots ,m-1\ ({\rm mod}\ m)$.
\end{thm}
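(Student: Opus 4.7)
The plan is to realize $M_K$ by the classical cut-and-paste construction along the free Seifert surface $S$ and then apply the Seifert--van Kampen theorem. Let $Y = \overline{S^3 \setminus S}$ be the manifold obtained by cutting $S^3$ along $S$. Because $S$ is free, $Y$ deformation retracts onto the handlebody $H_2$ of the Heegaard splitting used in Lin's presentation, so $\pi_1(Y) \cong F(x_1, \ldots, x_{2g})$ on the very generators of Lemma~\ref{L}. The boundary of $Y$ consists of two copies $S^+$ and $S^-$ of the Seifert surface sharing the common boundary $K$, and tracking through the derivation of the Lin presentation shows that the inclusion-induced homomorphisms $\pi_1(S^{\pm}) = F(a_1, \ldots, a_{2g}) \to \pi_1(Y)$ send $a_i$ to $\alpha_i$ via $S^+$ and to $\beta_i$ via $S^-$.

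Next, I would realize $M_K$ as the cyclic union of $m$ copies $Y_0, \ldots, Y_{m-1}$ of $Y$ in which $S^+_j$ is identified with $S^-_{j-1}$ for each $j$ modulo $m$; the index shift $j \mapsto j+1$ then realizes the deck transformation $\tau$, and the $m$ copies of $K$ collapse to a single branch circle embedded in $M_K$. The generators $\tau^j \tilde{x}_i$ of the stated presentation are identified with the free generators of $\pi_1(Y_j)$. Applying an iterated Seifert--van Kampen theorem to an open thickening of this cover, the adjacent intersection $Y_j \cap Y_{j+1}$ is homotopy equivalent to $S$, and the two inclusion homomorphisms $\pi_1(S) \to \pi_1(Y_j), \pi_1(Y_{j+1})$ send $a_i$ to $\tilde{\beta}_i^{(j)}$ and $\tilde{\alpha}_i^{(j+1)}$ respectively by the first step, producing exactly the $2g$ relations $\tilde{\alpha}_i^{(j)} = \tilde{\beta}_i^{(j-1)}$ after reindexing. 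Non-adjacent intersections reduce to the branch circle $K$ and contribute only redundant relations, since the class of the longitude of $K$ is already equated across consecutive $Y_j$'s by the surface relations.

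The main obstacle I expect is the first step: verifying that $Y$ is genuinely a handlebody of genus $2g$ with $\pi_1(Y) \cong F(x_1,\ldots,x_{2g})$ on exactly the Lin generators, and that the two boundary inclusions produce precisely the words $\alpha_i$ and $\beta_i$ rather than conjugate or automorphically equivalent variants. This is where freeness of $S$ is essential, and matching the pushed-off loops $a_i^{\pm}$ from Lin's construction with the correct words in the $x_j$'s requires a careful unpacking of the associated Heegaard splitting. A secondary subtlety is justifying that the higher-order van Kampen intersections, which all collapse to the branch circle $K$, can be handled without introducing new generators or relations.
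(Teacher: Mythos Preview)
The paper does not prove this theorem; it is quoted from Nagasato--Yamaguchi~\cite{NY} and used as a black box. So there is no in-paper argument to compare against.

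That said, your outline is the standard one and is essentially what is carried out in~\cite{NY}. Cutting $S^3$ along the free Seifert surface $S$ yields a compact $3$-manifold $Y$ that deformation retracts onto the complementary handlebody $H_2$, so $\pi_1(Y)$ is free on Lin's generators $x_1,\ldots,x_{2g}$; the two copies $S^\pm\subset\partial Y$ include via $a_i\mapsto\alpha_i$ and $a_i\mapsto\beta_i$ by the very definition of the words $\alpha_i,\beta_i$ in Lemma~\ref{L}. Gluing $m$ copies cyclically and applying Seifert--van Kampen produces exactly the relations $\tilde{\alpha}_i^{(j)}=\tilde{\beta}_i^{(j-1)}$.

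Your self-identified ``main obstacle'' is less serious than you think: the matching of $a_i^\pm$ with $\alpha_i,\beta_i$ is not something to be verified after the fact, it is how those words are \emph{defined} in the Lin set-up, so there is no ambiguity up to conjugacy to worry about. The genuinely delicate point is your ``secondary subtlety'': controlling the non-adjacent intersections along the branch circle. The clean way around this is not to apply van Kampen directly to the cyclic cover of closed pieces, but rather to first present $\pi_1$ of the \emph{unbranched} $m$-fold cyclic cover of $S^3\setminus K$ (where the pieces meet only pairwise along copies of $S$), and then kill the lift of the meridian to pass to $M_K$; equivalently, remove a small ball around a branch point before gluing. Either manoeuvre avoids having to argue separately that the longitude relation is redundant.
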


We can rewrite the presentation in Lemma~\ref{Pl}
by applying a Nagasato-Yamaguchi's presentation to $\punc(M_K)$ as follows:

\begin{equation}\label{R2}
\begin{split}
\langle \tau^{0}\tilde{x}_1 , \ldots , \tau^{0}\tilde{x}_{2g} ,\ldots , \tau^{m-1}\tilde{x}_1 , \ldots ,& \tau^{m-1}\tilde{x}_{2g} , \eta \ |\\  
 &\tilde{\alpha}^{(j)}_i = \tilde{\beta}^{(j-1)}_i , \eta \tau^{j+n}\tilde{x}_i \eta^{-1} = \tau^j \tilde{x}_i\rangle.
\end{split}
\end{equation}

\section{Proof of Theorem~\ref{Main}}

 We first introduce a property of irreducible $SL(2,\mathbb{C})$-metabelian representations of $\pKmn$ from~(\ref{R2}).
\begin{lem}\label{con}
Let $\rho$ be an irreducible $SL(2,\C)$-metabelian representation of $\pKmn$.
Then, up to conjugation, $\rho$ is of the form
$$
\rho(\tau^j\tilde{x}_i) =\left(
\begin{matrix}
 \lambda^{(j)}_i & 0\\
0 & {\lambda^{(j)}_i}^{-1}
\end{matrix}
\right)
 \hspace{20pt}
\rho(\eta) = 
\left(
\begin{matrix}
 0& -1\\
1 & 0
\end{matrix}
\right),
$$
where $i = 1,\ldots 2g , j \equiv 0,\ldots, m-1\ ({\rm mod}\ m)$, and $\lambda^{(j)}_i \neq {\lambda^{(j)}_i}^{-1}$ for some $i,j$.
\end{lem}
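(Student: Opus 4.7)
The plan is to mimic the argument Lin and Nagasato carry out for the Lin presentation of a $1$-knot, now applied to the presentation (\ref{R2}) of $G := \pKmn$. The key input is that every generator $\tau^j\tilde x_i$ lies in the commutator subgroup $G' = [G,G]$, so by the metabelian hypothesis their $\rho$-images must mutually commute. This gives a commuting family in $SL(2,\C)$, after which irreducibility of $\rho$ pins down the matrix form.

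First I would note that the abelianization of any $2$-knot complement is $\mathbb Z$ generated by a meridian, which here is $\eta$. Since each $\tau^j\tilde x_i$ has zero $\eta$-exponent sum, it lies in $G'$. By metabelianness, $\rho(G')$ is an abelian subgroup of $SL(2,\C)$, and in particular the matrices $\rho(\tau^j\tilde x_i)$ all commute.

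Second, I would invoke the classification of abelian subgroups of $SL(2,\C)$ up to conjugation: such a subgroup is contained in either (a) the diagonal torus $T$, (b) the subgroup $\{\pm I\}\cdot U$ of $\pm$ upper-unipotent matrices, or (c) the center $\{\pm I\}$. Irreducibility of $\rho$ excludes (b) and (c). In case (c) the image $\rho(G)$ is generated by $\{\pm I\}$ together with $\rho(\eta)$, hence abelian and thus reducible. In case (b) a nontrivial unipotent $\rho(G')$ has a unique fixed line, and because $G'\triangleleft G$ this line is preserved by all of $\rho(G)$, again forcing reducibility. Thus we must be in case (a); moreover, excluding (c) forces some $\rho(\tau^j\tilde x_i)$ to be non-central, giving $\lambda^{(j)}_i\neq (\lambda^{(j)}_i)^{-1}$ for some $i,j$.

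Third, I would determine the form of $\rho(\eta)$. The centralizer in $SL(2,\C)$ of any non-central element of $T$ is $T$ itself; since $\rho(G')$ contains such an element and is normal in $\rho(G)$, every $\rho(g)$ normalizes $T$. Hence $\rho(G)\subset N(T)=T\sqcup \omega T$ with $\omega=\bigl(\begin{smallmatrix}0&-1\\1&0\end{smallmatrix}\bigr)$. Irreducibility forbids $\rho(G)\subset T$, and since $G/G'\cong\mathbb Z$ is generated by the image of $\eta$, we deduce $\rho(\eta)\in \omega T$, i.e.\ $\rho(\eta)=\bigl(\begin{smallmatrix}0&b\\-b^{-1}&0\end{smallmatrix}\bigr)$ for some $b\in\C^{*}$. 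Conjugating by a diagonal matrix $\operatorname{diag}(s,s^{-1})$ with $s^{2}=-b^{-1}$ preserves the diagonal form of every $\rho(\tau^j\tilde x_i)$ and sends $\rho(\eta)$ to $\bigl(\begin{smallmatrix}0&-1\\1&0\end{smallmatrix}\bigr)$, completing the normalization.

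The main (mild) obstacle is excluding case (b): one must use the normality $G'\triangleleft G$ carefully to pass from a fixed line of $\rho(G')$ to a fixed line of $\rho(G)$. Everything else is a routine normalizer-of-torus argument, essentially the same one used for the $1$-knot case in the passage between equations (\ref{lineq2}) and (\ref{lineq}).
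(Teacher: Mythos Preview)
Your argument is correct and follows the same overall shape as the paper's: the generators $\tau^j\tilde x_i$ lie in the commutator subgroup, metabelianness makes their images commute, irreducibility rules out the $\pm$\,unipotent and central possibilities so that (after conjugation) they are all diagonal with at least one non-central, and a final diagonal conjugation normalizes $\rho(\eta)$.

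The one genuine tactical difference is in how the anti-diagonal form of $\rho(\eta)$ is obtained. The paper does not use the normalizer $N(T)=T\sqcup\omega T$; instead it writes $\rho(\eta)=\bigl(\begin{smallmatrix}a&b\\c&d\end{smallmatrix}\bigr)$ and plugs directly into the specific relation $\eta\,\tau^{j+n}\tilde x_i\,\eta^{-1}=\tau^j\tilde x_i$ from presentation~(\ref{R2}), comparing entries to force $a=d=0$ (and along the way deriving $\lambda_i^{(j+n)}\neq\lambda_i^{(j)}$ for some $i,j$, using that $\gcd(m,n)=1$). Your normalizer argument is cleaner and presentation-independent---it works for any knot group whose abelianization is generated by a meridian---whereas the paper's computation is tied to the particular relators of~(\ref{R2}). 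Both reach the same normal form.
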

\begin{proof}
Since $\rho$ is a metabelian representation, $\rho([\pKmn , \pKmn])$ is an abelian group.
Up to conjugation of $\rho$, we can assume that $\rho(x)$ is a diagonal matrix for any $x \in [\pKmn , \pKmn]$.
Since the generators $\tau^j\tilde{x}_i$ are on the Seifert surface of $\Kmn$,
all $\tau^j\tilde{x}_i$ are commutators in $\pKmn$.
Then $\rho(\tau^j\tilde{x}_i)$ are of the forms
\begin{equation*}
\rho(\tau^j\tilde{x}_i) =\left(
\begin{matrix}
 \lambda^{(j)}_i & 0\\
0 & {\lambda^{(j)}_i}^{-1}
\end{matrix}
\right)
 \hspace{30pt}
(\lambda^{(j)}_i \in \mathbb{C}\setminus {0}),
\end{equation*}
see the observation of ($\ref{lineq}$).
The matrix $\rho(\eta)$ is determined by the relations $\eta \tau^{j+ n}\tilde{x}_i \eta^{-1} = \tau^{j}\tilde{x}_i$ as follows.
Set 
$
\rho(\eta) = \left(
\begin{matrix}
a & b\\
c &d
\end{matrix}
\right) \in SL(2,\C).
$
Then $\rho(\eta \tau^{j+ n}\tilde{x}_i)$ and $\rho(\tau^{j}\tilde{x}_i \eta)$ are given as
\begin{equation*}
\begin{split}
&\rho(\eta \tau^{j+ n}\tilde{x}_i) = 
\left(
\begin{matrix}
 a& b\\
c & d
\end{matrix}
\right)
\left(
\begin{matrix}
 \lambda^{(j+ n)}_i& 0\\
0 & {\lambda^{(j + n)}_i}^{-1}
\end{matrix}
\right)
=
\left(
\begin{matrix}
 a\lambda^{(j+n)}_i& b{{\lambda}^{(j+n)}_i}^{-1}\\
c\lambda^{(j+n)}_i & d{{\lambda}^{(j+n)}_i}^{-1}
\end{matrix}
\right),
\\
&\rho(\tau^{j}\tilde{x}_i \eta)=
\left(
\begin{matrix}
 \lambda^{(j)}_i& 0\\
0 & {{\lambda}^{(j)}_i}^{-1}
\end{matrix}
\right)
\left(
\begin{matrix}
 a& b\\
c & d
\end{matrix}
\right)
=
\left(
\begin{matrix}
 a\lambda^{(j)}_i& b\lambda^{(j)}_i\\
c{{\lambda}^{(j)}_i}^{-1} & d{{\lambda}^{(j)}_i}^{-1}
\end{matrix}
\right).
\end{split}
\end{equation*}
These two matrices must be the same.
Assume that $\lambda^{(j+ n)}_i = \lambda^{(j)}_i$ for all $i,j$.
Since $m$ and $n$ are coprime, $\lambda^{(j)}_i = \lambda^{(0)}_i$ for any $i,j$.
If $\lambda^{(0)}_i = {{\lambda}^{(0)}_i}^{-1}$ for any $i$, then 
$\rho(\tau^{(j)}\tilde x_i) =
\left(
\begin{matrix}
 \pm1& 0\\
0 & \pm1
\end{matrix}
\right)
$ for any $i,j$.
Then $\rho$ is not irreducible.
If $\lambda^{(0)}_i \neq {{\lambda}^{(0)}_i}^{-1}$ for some $i$, then
 $\rho(\eta)$ is a diagonal matrix and $\rho(\pKmn)$ becomes an abelian group.
It also contradicts the irreducibility of $\rho$.
Therefore $\lambda^{(j+n)}_i \neq \lambda^{(j)}_i$ for some $i,j$.
In this case, $a=d = 0$ and 
$\rho(\eta) = \left(
\begin{matrix}
0 & b\\
-b^{-1} &0
\end{matrix}
\right).$
Set $B=\left(
\begin{matrix}
 b^{\frac{1}{2}}& 0\\
0 & b^{-\frac{1}{2}}
\end{matrix}
\right).$
Since 
$B\rho(\eta)B^{-1}=
\left(
\begin{matrix}
 0& -1\\
1 & 0
\end{matrix}
\right)
$ 
and 
$B\rho(\tau^{j}\tilde{x}_i)B^{-1}=\rho(\tau^{j}\tilde{x}_i)$,
we have $\rho(\eta)=
\left(
\begin{matrix}
 0& -1\\
1 & 0
\end{matrix}
\right)
$
up to conjugation.
\end{proof}

Let $S_0$ be a free Seifert surface of $K$ contained in  a fundamental reagion of $M_K$, where $M_K$ is a fiber of $\Kmn$.
Let $S_1, \ldots, S_{m-1}$ be copies of $S_0$ by the deck transformations.
We want to know relation between $\lambda^{(j)}_{i}$ and $\lambda^{(j+1)}_{i}$ for $j \equiv 1,\ldots, m-1\ ({\rm mod}\ m)$.
The relation $\eta \tau^{j+ n}\tilde{x}_i \eta^{-1} = \tau^{j}\tilde{x}_i$ means that
the conjugation by $\eta$ brings $\tau^{j+ n}\tilde{x}_i$ on $S_{j+n}$ to $\tau^{j}\tilde{x}_i$ on $S_j$.
Let $q$ be an integer such that $nq\equiv 1 ({\rm mod}\ m)$ and 
take conjugation of $\tau^j\tilde x_i$ by $\eta^q$. Then we obtain the relation
\begin{equation}\label{rel}
\tau^{j}\tilde{x}_i = \eta \tau^{j+n}\tilde{x}_i \eta^{-1} = \eta^{q} \tau^{j+nq}\tilde{x}_i \eta^{-q} = \eta^q \tau^{j+1}\tilde{x}_i \eta^{-q},
\end{equation}
which brings $\tau^{j+1}\tilde{x}_i$ on $S_{j+1}$ to $\tau^{j}\tilde{x}_i$ on $S_{j}$, 
 where we used $nq\equiv 1\ ({\rm mod}\ m)$.

Let $\rho$ be an irreducible $SL(2,\C)$-metabelian representation of $\pKmn$ in Lemma~\ref{con}.
From the relation~(\ref{rel}) and Lemma~\ref{con},
\begin{equation}\label{case}
\left(
\begin{matrix}
 \lambda^{(j+1)}_i & 0\\
0 & {\lambda^{(j+1)}_i}^{-1}
\end{matrix}
\right)
=\left\{
\hspace{10pt}
\begin{split}
\left(
\begin{matrix}
 \lambda^{(j)}_i & 0\\
0 & {{\lambda}^{(j)}_i}^{-1}
\end{matrix}
\right)
\hspace{30pt}
(q : {\rm even})
\hspace{3pt}\\
\left(
\begin{matrix}
 {\lambda^{(j)}_i}^{-1} & 0\\
0 & \lambda^{(j)}_i
\end{matrix}
\right)
\hspace{30pt}
(q : {\rm odd}).
\end{split}
\right.
\end{equation}

Suppose that $m$ is even.
Then $q$ is odd since $m$ and $q$ are coprime.
We define the representation $\overline{\rho}$ by
 $$
 \overline{\rho}(x) = \rho(\eta x \eta^{-1}) =\rho(\eta)\rho(x)\rho(\eta^{-1})
 $$
for all $x \in \pKmn$.
Note that $\rho(\eta)=
\left(
\begin{matrix}
0 & -1\\
1 & 0
\end{matrix}
\right)
$
by Lemma~\ref{con}.
 In particular, $\overline{\overline{\rho}}(x) = \rho(x)$.
By~(\ref{case}), $\rho(\tau^{j+1}\tilde{x}_i) = \overline{\rho}(\tau^{j}\tilde{x}_i)$ for all $i,j$.
Since $\tilde{\alpha}^{(j)}_i$ and $\tilde{\beta}^{(j)}_i$ are words written in $\tau^{j}\tilde{x}_1, \ldots \tau^{j}\tilde{x}_{2g}$, 
we have 
\begin{equation}\label{alpha}
\rho(\tilde{\alpha}^{(j+1)}_i) = \overline{\rho}(\tilde{\alpha}^{(j)}_i),
\hspace{20pt} 
\rho(\tilde{\beta}^{(j+1)}_i) = \overline{\rho}(\tilde{\beta}^{(j)}_i).
\end{equation}
On the other hand, 
\begin{equation}\label{rel2}
\tilde{\beta}^{(j)}_i = \tilde{\alpha}^{(j+1)}_i =\eta^{-q} \tilde{\alpha}^{(j)}_i \eta^{q}  
\end{equation}
holds, where the relation~(\ref{rel}) is applied to the second equality. 
Since $q$ is odd,  $\rho(\eta^{q} x \eta^{-q}) = \rho(\eta^{q}) \rho(x) \rho(\eta^{-q}) =  \rho(\eta) \rho(x) \rho(\eta^{-1}) = \overline{\rho}(x)$. 
Hence, by~(\ref{rel2}), we have
\begin{equation}\label{albe}
 \rho(\tilde{\beta}^{(j)}_i)= \overline{\rho}(\tilde{\alpha}^{(j)}_i).
 \end{equation}
From~(\ref{alpha}) and~(\ref{albe}), one can see that the relations of representations of the first relations $\tilde{\alpha}^{(j)}_i = \tilde{\beta}^{(j-1)}_i$ in~(\ref{R2}) 
are equivalent to $\rho(\tilde{\beta}^{(0)}_i) = \rho(\eta \tilde{\alpha}^{(0)}_i \eta^{-1})$.
\ \\
\ \\

The second relations $\eta \tau^{j+n}\tilde{x}_i \eta^{-1} = \tau^{j}\tilde{x}_i$ in~(\ref{R2}) are equivalent to $\eta^q \tau^{j+1}\tilde{x}_i \eta^{-q} = \tau^{j}\tilde{x}_i$ for all $j$ as checked in ~(\ref{rel}).
Therefore  $\rho(\eta \tau^{j+n}\tilde{x}_i \eta^{-1}) = \rho(\tau^{j}\tilde{x}_i)$ are equivalent to $ \rho(\eta \tau^{j+1}\tilde{x}_i \eta^{-1}) =\rho(\tau^{j}\tilde{x}_i )$.
Hence the number of irreducible $SL(2,\C)$-representations of the presentation~(\ref{R2}) is equal to that of representations of the group presented by 
\begin{equation}\label{simp}
\langle \tau^{0}\tilde{x}_1 , \ldots , \tau^{0}\tilde{x}_{2g},\ldots, \tau^{m-1}\tilde{x}_1 , \ldots , \tau^{m-1}\tilde{x}_{2g}   , \eta \ |\  
\eta \tilde{\alpha}^{(0)}_i \eta^{-1} = \tilde{\beta}^{(0)}_i, \eta \tau^{j+1}\tilde{x}_i \eta^{-1} = \tau^{j}\tilde{x}_i\rangle.
\end{equation}
Now, we reduce the generators 
$\tau^{1}\tilde{x}_1 , \ldots , \tau^{1}\tilde{x}_{2g},\ldots, \tau^{m-1}\tilde{x}_1 , \ldots , \tau^{m-1}\tilde{x}_{2g}$ 
and the relations $\eta \tau^{j+1}\tilde{x}_i \eta^{-1} = \tau^{j}\tilde{x}_i$ from the above presentation 
to simplify counting the number of irreducible $SL(2,\C)$-metabelian representations of $\pKmn$.
 
\begin{lem}\label{alt}
Let $m$ be an even integer.
Then the number of irreducible $SL(2,\C)$-metabelian representations of $\pKmn$ coincides that of the group $G$ presented by
\begin{equation}\label{red}
\langle \tau^{0}\tilde{x}_1 , \ldots , \tau^{0}\tilde{x}_{2g}, \eta \ |\  
\eta \tilde{\alpha}^{(0)}_i \eta^{-1} = \tilde{\beta}^{(0)}_i\rangle.
\end{equation}
\end{lem}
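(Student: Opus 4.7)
The plan is to rewrite (\ref{simp}) using the conjugation relations to eliminate the extra generators, and then to show that the only surviving non-trivial consistency condition is automatic for irreducible metabelian representations when $m$ is even. First I would use the relations $\eta\tau^{j+1}\tilde{x}_i\eta^{-1}=\tau^{j}\tilde{x}_i$ of (\ref{simp}) for $j=0,\ldots,m-2$ to substitute $\tau^{j}\tilde{x}_i=\eta^{-j}\tau^{0}\tilde{x}_i\eta^{j}$; the remaining relation indexed by $j=m-1$, together with $\tau^{m}\tilde{x}_i=\tau^{0}\tilde{x}_i$, reduces to $[\eta^{m},\tau^{0}\tilde{x}_i]=1$. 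Hence $\pKmn$ admits the presentation
\begin{equation*}
\langle\tau^{0}\tilde{x}_1,\ldots,\tau^{0}\tilde{x}_{2g},\eta \mid \eta\tilde{\alpha}^{(0)}_i\eta^{-1}=\tilde{\beta}^{(0)}_i,\ [\eta^{m},\tau^{0}\tilde{x}_i]\rangle,
\end{equation*}
which is $G$ augmented by the commutation relations $[\eta^{m},\tau^{0}\tilde{x}_i]=1$. Pullback along the resulting surjection $G\twoheadrightarrow\pKmn$ gives an injection from conjugacy classes of irreducible metabelian representations of $\pKmn$ into those of $G$, whose image consists of exactly the representations $\rho$ of $G$ satisfying $[\rho(\eta)^{m},\rho(\tau^{0}\tilde{x}_i)]=I$ for every $i$.

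It then suffices to show every irreducible metabelian representation $\rho\colon G\to SL(2,\C)$ automatically satisfies these commutations. For this I invoke the classification of irreducible metabelian subgroups of $SL(2,\C)$: up to conjugation, such a subgroup is contained in the normalizer $N(T)=T\cup TJ$ of the diagonal torus $T$, where $J=\begin{pmatrix}0 & -1\\ 1 & 0\end{pmatrix}$. Since $N(T)/T\cong\Z/2\Z$ is abelian, the commutator subgroup $[\rho(G),\rho(G)]$ lies in $T$. Combined with the fact that $\tau^{0}\tilde{x}_i\in[G,G]$---which follows from abelianizing the relations $\eta\tilde{\alpha}^{(0)}_i\eta^{-1}=\tilde{\beta}^{(0)}_i$ to obtain $(V-V^{T})\bar{X}=0$, where the skew-symmetric intersection matrix $V-V^{T}$ of the Seifert surface is unimodular---we conclude $\rho(\tau^{0}\tilde{x}_i)\in T$ for every $i$.

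On the other hand $\rho(\eta)\in N(T)$. If $\rho(\eta)\in T$ then $\rho(G)\subset T$ is abelian, making $\rho$ reducible---contradiction. Hence $\rho(\eta)\in TJ$, and a short computation using $JtJ^{-1}=t^{-1}$ for $t\in T$ yields $\rho(\eta)^{2}=-I$. Since $m$ is even, $\rho(\eta)^{m}=(-1)^{m/2}I$ is central in $SL(2,\C)$ and so commutes with each $\rho(\tau^{0}\tilde{x}_i)$. The extra relations therefore hold automatically, and the bijection on conjugacy classes of irreducible metabelian representations is established.

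The main obstacle I anticipate is carefully justifying the normal-form claim $\rho(G)\subset N(T)$; this is essentially the content of the opening lines of the proof of Lemma~\ref{con} (and of Lin's derivation in (\ref{lineq2})--(\ref{lineq})). The argument rests on the fact that $\rho([G,G])$ is abelian and, by irreducibility, not purely parabolic, hence simultaneously diagonalizable; since $[G,G]$ is normal in $G$, the image $\rho(G)$ must then normalize this torus.
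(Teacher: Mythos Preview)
Your argument is correct in substance and takes a somewhat different, more group-theoretic route than the paper. The paper's proof works entirely at the level of representations: it observes that restriction sends a representation of~(\ref{simp}) to one of~(\ref{red}), and then, given an irreducible metabelian $\rho$ on~(\ref{red}), explicitly defines $\rho(\tau^{j}\tilde{x}_i)$ by iterating the involution $\overline{\rho}(\,\cdot\,)=\rho(\eta)\,(\cdot)\,\rho(\eta)^{-1}$, checking well-definedness using the Lin/Nagasato normal form $\rho(\eta)=J$. You instead perform a genuine Tietze reduction of~(\ref{simp}) to isolate the single obstruction $[\eta^{m},\tau^{0}\tilde{x}_i]=1$, and then kill it with the observation $\rho(\eta)^2=-I$. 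Both arguments hinge on the same fact (that $\rho(\eta)$ is anti-diagonal), but your packaging makes the role of ``$m$ even'' completely transparent: it is precisely what makes $\rho(\eta)^m$ central. Your self-contained verification that $\tau^{0}\tilde{x}_i\in[G,G]$ via the unimodularity of $V-V^{T}$ is also a nice touch; the paper simply cites~\cite{N} for the normal form on~(\ref{red}).

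One point of phrasing needs correction. You write ``Hence $\pKmn$ admits the presentation $\langle\ldots\mid\ldots,[\eta^m,\tau^0\tilde{x}_i]\rangle$'' and then speak of a surjection $G\twoheadrightarrow\pKmn$. The paper does \emph{not} establish that~(\ref{simp}) is a presentation of $\pKmn$; the passage leading up to Lemma~\ref{alt} only shows that~(\ref{R2}) and~(\ref{simp}) have the same \emph{count} of irreducible metabelian representations (the equivalence of the two relator families is argued for representations in the given normal form, not as a Tietze move). So your Tietze reduction yields a presentation of the group~(\ref{simp}), not of $\pKmn$ itself, and the surjection is $G\twoheadrightarrow\text{(group~(\ref{simp}))}$. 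This does not damage the proof: your argument shows that~(\ref{simp}) and~(\ref{red}) have the same count, and combining with the paper's prior step gives the lemma. Just replace ``$\pKmn$'' by ``the group presented by~(\ref{simp})'' in those two sentences.
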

\begin{proof}
A representation of~(\ref{simp}) is a representation of ~(\ref{red}).
So, we prove the converse. The representation of $\tau^j\tilde x_i$ for $j\equiv 1,\ldots,m-1\ ({\rm mod}\ m)$ is
determined by the equality 
$\rho(\tau^{j+1}\tilde{x}_i) = \overline{\rho}(\tau^{j}\tilde{x}_i)$ obtained from~(\ref{case}).
Hence, it is enough to prove that any irreducible $SL(2,\C)$-metamerian representation $\rho$ of (\ref{red}) has the property 
$\rho(\eta \tau^{0}\tilde{x}_i \eta^{-1}) = \overline{\rho}(\tau^{0}\tilde{x}_i)$.
Since the presentation in (\ref{red}) is exactly of the same form as the Lin's presentation~(\ref{L1}),
 all $\rho(\tau^{0}\tilde{x}_i)$ and $\rho(\eta)$ are of the forms
$$
\rho(\tau^{0}\tilde{x}_i)=
\left(
\begin{matrix}
 \lambda^{(0)}_i & 0\\
0 & {{\lambda}^{(0)}_i}^{-1}
\end{matrix}
\right),
\hspace{30pt}
\rho(\eta)=
\left(
\begin{matrix}
0 & -1\\
1 & 0
\end{matrix}
\right),
$$
for $i=1,\ldots, 2g$ up to conjugation, see~\cite{N}.
Then, by the definition of $\overline{\rho}$,
$$
\overline{\rho}(\tau^{0}\tilde{x}_i)=
\left(
\begin{matrix}
{{\lambda}^{(0)}_i}^{-1} & 0\\
0 & \lambda^{(0)}_i
\end{matrix}
\right),
\hspace{30pt}
\overline{\rho}(\eta)=
\left(
\begin{matrix}
0 & -1\\
1 & 0
\end{matrix}
\right)
$$
hold. 
Therefore 
$
\rho(\eta \tau^{0}\tilde{x}_i \eta^{-1})=
\left(
\begin{matrix}
 {{\lambda}^{(0)}_i}^{-1} & 0\\
0 & \lambda^{(0)}_i
\end{matrix}
\right),
$
and this is $\overline{\rho}(\tau^{0}\tilde{x}_i)$.
\end{proof}

\begin{proof}[Proof\ of {\rm\ Theorem~\ref{Main}}]
We decompose the proof into two cases: (1) $m$ is even or (2) $m$ is odd. 
 In case (1),  by Lemma~\ref{alt}, we only need to count the number of irreducible $SL(2,\C)$-metabelian representations of $G$ in the Lemma.
Since the presentation in (\ref{red}) is exactly of the same form as the Lin's presentation~(\ref{L1}),
each $\lambda^{(0)}_1,\ldots, \lambda^{(0)}_{2g}$ must satisfy the following equations as explained in~\cite{L}:
\begin{equation}\label{eq}
\mathrm{e}^{\sqrt{-1}}{r_1}^{\omega_{i1}\theta_i} \cdots {r_{2g}}^{\omega_{i2g}\theta_{2g}} = 1,
\end{equation}
\begin{equation}\label{eq2}
 {r_1}^{\omega_{i1}}\cdots {r_{2g}}^{\omega_{i2g}} = 1,
\end{equation}
where $\lambda^{(0)}_i = r_i\mathrm{e}^{\sqrt{-1}\theta_i}$ for $i = 1,\ldots ,2g$ and the matrix $(\omega_{ij}) = V + {^t}V$  is defined in~Section 2.3.
With some linear algebra, one can see that
the solution of~(\ref{eq2}) is $(r_1,\ldots,r_{2g}) = (1,\ldots,1)$ and
 the number of non-trivial solutions of~(\ref{eq}) is $|{\det}(V + {^t}V)|-1 = |\Delta_K(-1)|-1$. 
If $\{\gamma_i\}_{0\leq i \leq 2g}$ is a solution of~(\ref{eq}) and~(\ref{eq2}), then $\{{\gamma_i}^{-1}\}_{0\leq i \leq 2g}$ is also, which is given by the conjugation of $\rho$.
Therefore the number of irreducible $SL(2,\C)$-metabelian representations of $\pKmn$ is $\frac{|\Delta_K(-1)|-1}{2}$.

In case (2), 
if $q$ is even, then $\lambda^{(j)}_i = \lambda^{(0)}_i$ for all $j$ by~(\ref{case}).
Then the relation $\eta \tau^{j+n}\tilde{x}_i \eta^{-1} = \tau^{j}\tilde{x}_i$ in~(\ref{R2}) gives 
$\lambda^{(j)}_i = {{\lambda}^{(j)}_i}^{-1}$ for all $i,j$.
In this case, there is no irreducible $SL(2,\C)$-metabelian representation of $\pKmn$ by Lemma~\ref{con}.
Suppose that $q$ is odd.
From the relations~(\ref{rel}), we have
\begin{equation*}
\tau^{j}\tilde{x}_i = \eta^{q} \tau^{j +1}\tilde{x}_i \eta^{-q} = \eta^{mq} \tau^{j+m}\tilde{x}_i \eta^{-mq}
= \eta^{mq} \tau^{j}\tilde{x}_i \eta^{-mq}.
\end{equation*}
Then we have $\lambda^{(j)}_i = {{\lambda}^{(j)}_i}^{-1}$ for all $i,j$ since $mq$ is odd, 
and hence there is no irreducible $SL(2,\C)$-metabelian representation of $\pKmn$ by Lemma~\ref{con}.
Thus the assertion holds.
\end{proof}

\end{document}